\theoremstyle{plain} 
\newtheorem{thm}{Theorem}[section]
\newtheorem{lem}[thm]{Lemma}
\theoremstyle{definition}
\newtheorem{defn}{Definition}[section]
\newtheorem{conj}{Conjecture}[section]
\theoremstyle{remark}
\begin{document}

\title{A note on Kalai's $3^d$ Conjecture}

\author{Gregory R. Chambers}
\address{Department of Mathematics, Rice University, Houston, TX, 77005}
\email{gchambers@rice.edu}

\author{Elia Portnoy}
\address{Department of Mathematics, MIT, Cambridge, MA, 02139}
\email{eportnoy@mit.edu}

\date{\today}
\begin{abstract}
    Suppose that $C$ is a centrally symmetric $d$-dimensional convex polytope; in 1989 Kalai conjectured that
    $C$ has at least $3^d$ faces.  We prove this result if there are $d$ hyperplanes with orthogonal normal vectors so that $C$ is symmetric about all of them.
\end{abstract}
\maketitle

Suppose that $C \subset \mathbb{R}^d$ is a centro-symmetric convex polytope with nonempty interior, that is,
\begin{enumerate}
    \item $C = -C$.
    \item $C$ has nonempty interior.
    \item $C$ is the intersection of finitely many half-spaces - sets of the form
        $$  \{ x \in \mathbb{R}^d : x \cdot n \leq c \}   $$
    for some fixed unit vector $n$ and real number $c$.
\end{enumerate}

Before stating the conjecture, we will need the following definition:
\begin{defn}
    \label{defn:face}
    Suppose that $C$ is as above.  A $d$-dimensional \emph{face} of $C$ is
    $C$ itself, and a $k$-dimensional \emph{face} of $C$ with $0 \leq k < d$ is a subset $X$ of $\partial C$ so that:
    \begin{enumerate}
        \item $X$ has Hausdorff dimension $k$.
        \item $X$ is equal to $\partial C \cap H$, where $H$ is a hyperplane.
    \end{enumerate}
\end{defn}

In 1989, Kalai made the following conjecture in \cite{kalai}:
\begin{conj}[Kalai]
    \label{conj:kalai}
    If $C$ is as above, then $C$ has at least $3^d$ faces (adding up all the faces of all dimensions).
\end{conj}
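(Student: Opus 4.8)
The plan is to reduce to the \emph{unconditional} case and then induct on the dimension, slicing by a single mirror hyperplane and proving a factor-three lower bound at each step. After an orthogonal change of coordinates I may assume the $d$ mutually orthogonal mirror hyperplanes are the coordinate hyperplanes, so that $C$ is invariant under every sign change $x_i \mapsto -x_i$. Write $f(P)$ for the total number of nonempty faces of a polytope $P$ (counting $P$ itself). I induct on $d$, the cases $d = 0, 1$ being immediate ($1 = 3^0$, and the segment $[-1,1]$ gives $3 = 3^1$). For the inductive step, let $\sigma$ be the reflection $x_d \mapsto -x_d$, let $H = \{x_d = 0\}$, and set $C_0 = C \cap H$. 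Using central symmetry and nonempty interior, $C_0$ is a $(d-1)$-dimensional polytope that is unconditional in $H \cong \mathbb{R}^{d-1}$, so the inductive hypothesis gives $f(C_0) \geq 3^{d-1}$. I also record the elementary fact that the coordinate projection $\pi$ onto $H$ satisfies $\pi(C) = C_0$: if $(y, t) \in C$ then $(y, -t) \in C$, hence $(y, 0) \in C$ by convexity. It therefore suffices to prove $f(C) \geq 3 f(C_0)$.

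The first move is to classify each face $F$ of $C$ by the sign of the last coordinate of the outer normals that expose it. A short argument using $\sigma$ shows that if $F$ lies in $\{x_d \geq 0\}$ and meets $\{x_d > 0\}$, then every exposing normal $n$ has $n_d \geq 0$: picking $p \in F$ with $p_d > 0$ and comparing the value of $n$ at $p$ and at $\sigma(p) = (p', -p_d) \in C$ forces $0 \leq 2 n_d p_d$. This partitions the faces into \emph{upper}, \emph{lower} $= \sigma(\mathrm{upper})$, and \emph{central} (faces lying in $H$ or crossing it), so that $f(C) = 2\lvert\mathrm{Upper}\rvert + \lvert\mathrm{Central}\rvert$. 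The easy half of the bound is $\lvert\mathrm{Central}\rvert \geq f(C_0)$: for each face $G$ of $C_0$ choose a normal $n'$ exposing $G$ in $C_0$; since $\pi(C) = C_0$, the functional $(n', 0)$ exposes $V_G := \pi^{-1}(G) \cap C$, which is therefore a face of $C$, is $\sigma$-invariant and hence central, and satisfies $\pi(V_G) = G$, so $G \mapsto V_G$ is injective.

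It remains to construct an injection from the faces of $C_0$ into the \emph{upper} faces, and this is where I expect the main difficulty. Writing $C = \{(y, t) : y \in C_0,\ \lvert t\rvert \leq h(y)\}$ for the concave piecewise-linear height function $h \geq 0$ (with $h > 0$ on the relative interior of $C_0$), the upper faces are exactly the faces of the graph of $h$, equivalently the cells of the regular subdivision $\mathcal{S}$ of $C_0$ induced by $h$. For a face $G$ of $C_0$ I would pick $n'_G$ in the relative interior of its normal cone and let $U_G$ be the first upper face met by the ray $s \mapsto (n'_G, s)$ with $s > 0$, i.e.\ the face exposed as this ray leaves the central normal cone of $V_G$. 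In the two extremal families this recovers the expected matchings — the prism lift for the cube and the join $G \mapsto \mathrm{conv}(G, +e_d)$ for the cross-polytope — and in general $U_G$ is genuinely upper.

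The crux is injectivity of $G \mapsto U_G$. My plan is to exhibit a one-sided inverse $\Lambda$ sending an upper face $F$ to the face of $C_0$ exposed by $\pi(n)$, for $n$ in the relative interior of the normal cone of $F$, and to check that $\Lambda(U_G) = G$. The content to be verified is that $\Lambda$ is \emph{well defined}: one must show that $\pi$ carries the normal cone of each upper face into a single cone of the normal fan $\mathcal{N}(C_0)$, which amounts to the statement that the dual fan of the subdivision $\mathcal{S}$ refines $\mathcal{N}(C_0)$. This is precisely where the concavity and nonnegativity of $h$ enter, and I expect it to be the technical heart of the proof (a naive attempt to assign three faces per sign-pattern or to tilt normals by an infinitesimal amount fails, as pointed bodies such as the cross-polytope show). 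Granting the refinement, $\lvert\mathrm{Upper}\rvert \geq f(C_0)$, and since the upper, central, and lower faces are pairwise disjoint, $f(C) = 2\lvert\mathrm{Upper}\rvert + \lvert\mathrm{Central}\rvert \geq 3 f(C_0) \geq 3^d$, closing the induction.
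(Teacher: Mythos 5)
Your reduction and bookkeeping are sound as far as they go: passing to unconditional $C$, the base cases, the fact that $C_0 = C \cap H$ is unconditional with $\pi(C) = C_0$, the sign classification of exposing normals, the resulting partition into upper, lower, and central faces with $f(C) = 2|\mathrm{Upper}| + |\mathrm{Central}|$, and the injection $G \mapsto V_G$ giving $|\mathrm{Central}| \ge f(C_0)$ are all correct, and this inductive slicing strategy is genuinely different from the paper's. But the proof is not complete: the companion inequality $|\mathrm{Upper}| \ge f(C_0)$, which you yourself call the technical heart, is never proved --- you write ``Granting the refinement.'' That inequality is the entire content of the factor-three bound at each inductive step, so what you have is a plan, not a proof.

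Worse, the route you propose for closing the gap is false as stated. Well-definedness of $\Lambda$ requires that $\pi$ carry the relative interior of the normal cone of each upper face into a single cone of $\mathcal{N}(C_0)$, and the planar cross-polytope --- your own test case --- already refutes this. Take $C = \mathrm{conv}\{\pm e_1, \pm e_2\}$, so $C_0 = [-1,1]$. The upper vertex $(0,1)$ has normal cone $\{n \in \mathbb{R}^2 : |n_1| \le n_2\}$, whose relative interior projects under $\pi$ onto all of $\mathbb{R}$; choosing $n = (1,2)$, $(0,1)$, $(-1,2)$ there, $\pi(n)$ exposes $\{1\}$, $[-1,1]$, $\{-1\}$ respectively, so $\Lambda$ is ambiguous at the apex. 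The dual fan of the regular subdivision $\mathcal{S}$ does \emph{not} refine $\mathcal{N}(C_0)$ in general; pointed vertices lying over the interior of $C_0$ are exactly the obstruction, i.e.\ the same phenomenon you noted defeats naive tilting also defeats your fix. (Relatedly, ``the upper faces are exactly the faces of the graph of $h$'' is not quite right: faces of that graph lying in $H$, such as $(\pm 1,0)$ here, are central, so upper faces correspond to only part of $\mathcal{S}$.) The map $G \mapsto U_G$ may well be injective --- it is in both of your model examples --- but that requires an argument that does not factor through $\Lambda$. For contrast, the paper avoids slicing and induction altogether: it assigns to each of the $3^d-1$ cones $X_K$ a face $\tau_K$ of minimal dimension whose relative interior meets $X_K^{\circ}$, proves $\tau_K^{\circ} \subset Q_K^{\circ}$ using the reflection symmetries (Lemma~\ref{lem:interior_inclusion}), and deduces that the $\tau_K$ are pairwise distinct; that global argument never has to compare upper and central face counts, which is precisely where your induction gets stuck.
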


It is known that Conjecture~\ref*{conj:kalai} is true for dimensions $\leq 4$ (see \cite{4d}).  It is also known
that it is true for all polytopes whose faces are simplices; this was proved by Stanley in \cite{simplices},
answering a conjecture to due to B\'{a}r\'{a}ny and Lov\'{a}sz in \cite{stanley_precursor}.  Finally, it is known to
be true for the Hansen polytopes of split graphs, see \cite{split}.

We can now state our main theorem, for which we will give a short proof:
\begin{thm}
    \label{thm:main}
    Suppose that $C$ is as above. Let $\mathcal{B}$ be an orthogonal basis of $\mathbb{R}^d$ and suppose
    that $C$ is symmetric about each hyperplane through the origin normal to some vector in $\mathcal{B}$. Then Conjecture~\ref*{conj:kalai} is true.
\end{thm}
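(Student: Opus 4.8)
The plan is to induct on the dimension $d$, after normalizing the symmetry hypothesis. Since $C$ is symmetric about the hyperplane through the origin normal to each vector of $\mathcal{B}$, it is invariant under each reflection fixing such a hyperplane, and these reflections generate the full group $W\cong(\mathbb{Z}/2)^d$ of coordinate sign changes. After an orthogonal change of coordinates carrying $\mathcal{B}$ to the standard basis (which preserves the polytope and its face count), we may assume $C\subset\mathbb{R}^d$ is \emph{unconditional}: $(x_1,\dots,x_d)\in C$ forces $(\pm x_1,\dots,\pm x_d)\in C$ for all sign choices. Write $N(C)$ for the total number of faces (all dimensions, including $C$); the cube $[-1,1]^d$, with $\binom{d}{k}2^{d-k}$ faces in dimension $k$, shows the target value is $3^d=\sum_k\binom{d}{k}2^{d-k}$. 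The base case $d=0$ is immediate. For the inductive step, let $\pi\colon\mathbb{R}^d\to\mathbb{R}^{d-1}$ forget the last coordinate; the shadow $\pi(C)$ is again full-dimensional and unconditional, so by induction $N(\pi(C))\ge 3^{d-1}$, and it suffices to prove the key inequality
\[
    N(C)\ \ge\ 3\,N(\pi(C)).
\]
Everything is analyzed through the involution $\sigma\in W$ flipping the last coordinate, $x_d\mapsto -x_d$.

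First I would split the faces of $C$ into those fixed and those moved by $\sigma$. A face $F$ is $\sigma$-invariant exactly when its range of $x_d$-values is symmetric about $0$; the remaining faces occur in pairs $\{F,\sigma F\}$. Writing $I$ for the number of $\sigma$-invariant faces and $P$ for the number of such pairs, one has $N(C)=I+2P$. The first claim is that $I=N(\pi(C))$. Indeed, a face is $\sigma$-invariant if and only if it is exposed by a functional $(\bar n,0)$ with vanishing last coordinate: such a functional is $\sigma$-fixed, so its maximizing face is $\sigma$-invariant, and conversely if $F$ is $\sigma$-invariant then $N(F)$ is a $\sigma$-invariant cone, so averaging a relative-interior normal with its $\sigma$-image yields a vertical normal still exposing $F$. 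Since maximizing $(\bar n,0)$ over $C$ agrees with maximizing $\bar n$ over $\pi(C)$, the assignment $F\mapsto\pi(F)$ is a bijection between the $\sigma$-invariant faces of $C$ and the faces of $\pi(C)$, giving $I=N(\pi(C))$.

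It then remains to prove $P\ge I$, which I would establish by injecting the $\sigma$-invariant faces into the set of $\sigma$-pairs. To an invariant face $F$ with $x_d$-range $[-a,a]$ and $a>0$, assign the pair containing its top face $F\cap\{x_d=a\}$; to an invariant face $F\subseteq\{x_d=0\}$, assign the pair containing the face $F^{\uparrow}$ obtained by tilting a relative-interior normal of $F$ slightly in the $+e_d$ direction. In each case the target face attains $x_d>0$ somewhere, hence is non-invariant, while its $\sigma$-partner lies strictly below; consequently distinct invariant faces are sent to distinct pairs. Combining, $N(C)=I+2P\ge I+2I=3I=3N(\pi(C))$, which closes the induction.

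The hard part will be the final injectivity verification, together with the care needed in the bijection $I=N(\pi(C))$. One must treat low-dimensional and degenerate faces precisely — faces lying inside $\{x_d=0\}$, and the possibility that several invariant faces share a common top face — and check throughout that the normal-cone averaging and tilting operations land in the correct relative interiors. The decomposition $N(C)=I+2P$ and the identity $I=N(\pi(C))$ are robust; the genuine content, and the point where the hypothesis is essential, is that unconditional symmetry forces the \emph{non}-invariant faces to be at least as numerous (in pairs) as the invariant ones, thereby supplying the extra factor of $3$.
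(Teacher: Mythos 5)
Your inductive strategy (project out the last coordinate, split the faces of $C$ into $\sigma$-orbits, and prove $I=N(\pi(C))$ together with $P\ge I$) is genuinely different from the paper's argument, which is non-inductive: there, one indexes $3^d-1$ cones by the nonzero sign vectors, assigns to each cone a face of minimal dimension whose relative interior meets the open cone, and uses the reflection symmetries to show these faces are pairwise distinct. Your bookkeeping is fine as far as it goes: the orbit decomposition $N(C)=I+2P$ is trivially valid, and the bijection $I=N(\pi(C))$ via exposing functionals with vanishing last coordinate is correct. But the entire content of the theorem sits in the inequality $P\ge I$, and both of the constructions you propose for it fail.

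First, the tilting construction for an invariant face $F\subseteq\{x_d=0\}$ does not produce a new face. For a polytope, maximizing the perturbed functional $(\bar n,\epsilon)$ for all sufficiently small $\epsilon>0$ gives the lexicographic optimum $\mathrm{argmax}_{x_d}\bigl(\mathrm{argmax}_{(\bar n,0)}C\bigr)=\mathrm{argmax}_{x_d}(F)=F$, since every point of $F$ has $x_d=0$. So $F^{\uparrow}=F$ is itself invariant and lies in no pair; the map is not even well defined on these faces. Second, the top-face assignment is not injective. Take $C=\mathrm{conv}\bigl(\{(\pm 1,\pm 1,\pm 1)\}\cup\{(\pm 2,0,0)\}\bigr)$, which is unconditional. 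The triangular facet $\mathrm{conv}\{(1,1,1),(1,1,-1),(2,0,0)\}$ (exposed by $(1,1,0)$) and the vertical edge $\mathrm{conv}\{(1,1,1),(1,1,-1)\}$ (exposed by $(1,2,0)$) are two distinct $\sigma$-invariant faces of positive $x_3$-extent, and both have top face equal to the single vertex $\{(1,1,1)\}$, so they are sent to the same pair; this polytope also has the invariant vertices $(\pm 2,0,0)$ inside $\{x_3=0\}$, exhibiting the first failure as well. The inequality $P\ge I$ does hold in this example ($I=13$, $P=15$) and may well be true in general, but as written you have not proved it: any repair must handle faces lying in the hyperplane $\{x_d=0\}$ and distinct invariant faces sharing a top face, which is precisely the sort of degeneracy the paper's minimal-dimension-plus-reflection argument is designed to circumvent.
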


We would also like to point out that, at the same time that this article was completed, this result was independently proved
by R. Sanyal and M. Winter in \cite{SW}.  We now introduce the notion of a cone used our proof. 

\begin{defn}
    \label{defn:cones} Define $\mathcal{B}' = \cup_{v \in \mathcal{B}} \{v, -v\}$. Suppose that $K$ is any nonempty subset of $\mathcal{B}'$ so that for each $v \in \mathcal{B}$, at most one of $v$ or $-v$ is in $K$. We define the \emph{cone} $X_K$ to be the set of non-negative linear combinations of vectors in $K$, that is,
    
    $$X_K = \Bigl\{ \sum_{v \in K} t_v\, v:\, t_v \ge 0 \text{ for all } v \in K \Bigr\}.$$

    \noindent The interior of the cone is

    $$X_K^{\circ} = \Bigl\{ \sum_{v \in K} t_v\, v:\, t_v > 0 \text{ for all } v \in K \Bigr\}.$$

\end{defn}

We also define the interior of a face of dimension at least $1$ in the obvious way, and we define the interior
of a $0$-dimensional face (a point) as the face (point) itself.  We will use $\tau^\circ$ to
denote the interior of the face $\tau$.

\begin{lem}
    \label{lem:number_cones}
    The number of cones is $3^d - 1$.
\end{lem}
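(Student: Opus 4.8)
The plan is to count directly the admissible index sets $K$ and then argue that this count equals the number of distinct cones. First I would observe that specifying an admissible $K \subseteq \mathcal{B}'$ amounts to making an independent choice for each basis vector $v \in \mathcal{B}$: either include $v$, include $-v$, or include neither (the condition in Definition~\ref{defn:cones} forbids including both). Since there are $d$ vectors in $\mathcal{B}$ and $3$ options for each, there are exactly $3^d$ subsets $K \subseteq \mathcal{B}'$ satisfying the admissibility constraint. Exactly one of these choices — ``neither'' for every $v$ — produces the empty set, which Definition~\ref{defn:cones} excludes, so there are $3^d - 1$ admissible nonempty index sets.

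Next, to conclude that this counts distinct cones $X_K$, I would verify that the assignment $K \mapsto X_K$ is injective. Because $\mathcal{B}$ is an orthogonal basis, any admissible $K$ (which contains at most one of $\pm v$ for each $v \in \mathcal{B}$) consists of linearly independent vectors, so $X_K$ is a simplicial cone whose extreme rays are exactly the rays $\{ t v : t \ge 0 \}$ for $v \in K$. The $2d$ vectors of $\mathcal{B}'$ span pairwise distinct rays (distinct basis vectors are linearly independent, and $v$ and $-v$ point in opposite directions), so $K$ can be recovered from $X_K$ as the set of elements of $\mathcal{B}'$ lying on its extreme rays. Hence distinct admissible $K$ give distinct cones, and the number of cones equals the number of index sets, namely $3^d - 1$.

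I expect the only genuine subtlety to be this injectivity step; without it one would merely be counting symbols $K$ rather than geometric sets. The verification rests entirely on the linear independence supplied by the orthogonality of $\mathcal{B}$, which guarantees that no relation among non-negative combinations collapses two different generating sets to the same cone. The ternary-counting step itself is routine, so the overall argument is short.
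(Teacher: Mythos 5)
Your proof is correct, and at its core it performs the same count as the paper: enumerating the admissible index sets $K$. The paper organizes this count differently --- for each of the $\binom{d}{k}$ subsets of $\mathcal{B}$ of size $k$ it counts $2^k$ sign choices and evaluates $\sum_{k=1}^d \binom{d}{k} 2^k = 3^d - 1$ via the binomial theorem --- whereas you make a single ternary choice ($v$, $-v$, or neither) for each basis vector, obtaining $3^d$ directly and subtracting the empty set; these are the same enumeration grouped in two standard ways. The genuine difference is your second step: the paper tacitly identifies cones with index sets and never checks that $K \mapsto X_K$ is injective, while you verify this by noting that the elements of an admissible $K$ are linearly independent, so $X_K$ is a simplicial cone whose extreme rays are exactly the rays spanned by the elements of $K$, and since the $2d$ vectors of $\mathcal{B}'$ span pairwise distinct rays, $K$ is recoverable from $X_K$. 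This closes a small gap that the paper leaves implicit --- without injectivity one has counted labels rather than geometric sets --- so your argument is slightly more complete, at the modest cost of invoking the standard fact about extreme rays of simplicial cones.
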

\begin{proof}
    For every subset of $\mathcal{B}$ of size $k$, there are $2^k$ cones.  Thus, the total number of cones is
        $$ \sum_{k=1}^d { d \choose k } 2^k $$
    which by the binomial theorem is $3^d - 1$.
\end{proof}

For every cone $X_K$, we will associate a face $\tau_K \subset \partial C$ to it.  We will prove that they are all distinct; since there is exactly one $d$-dimensional simplex ($C$ itself), Lemma~\ref*{lem:number_cones} will complete the proof of Theorem~\ref*{thm:main}.

Fix a cone $X_K$, and choose $\tau_K$ to be a face of $\partial C$ which satisfies
    $$ \tau^{\circ} \cap X_K^{\circ} \neq \emptyset, $$
and which has minimal dimension among all of these. Note that such a $\tau_K$ exists because the interior of the $d$-dimensional face of $C$ intersects the interior of each cone, and so $\partial C$ intersects the interior of each cone.

Denote by $Q_K$ the union of all cones that contain $X_K$. Since $\mathcal{B}$ is an orthonormal basis, $Q_K$ can also be defined as

$$Q_K = \bigcap_{v \in K} \{x \in \mathbb{R}^d: x \cdot v \ge 0\}$$


We have the following lemma:
\begin{lem}
    \label{lem:interior_inclusion}
    We have the following inclusion:
        $$ \tau_K^{\circ} \subset Q_K^\circ. $$
\end{lem}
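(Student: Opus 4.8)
The plan is to prove the inclusion one vector at a time: I will fix a vector $w \in K$ and show that every $x \in \tau_K^\circ$ satisfies $x \cdot w > 0$. Since $\mathcal{B}$ is orthonormal we have $Q_K^\circ = \{x : x \cdot w > 0 \text{ for all } w \in K\}$, so establishing this strict inequality for each $w \in K$ separately gives $\tau_K^\circ \subset Q_K^\circ$. The one piece of data I start from is a point $p \in \tau_K^\circ \cap X_K^\circ$, which exists by the choice of $\tau_K$; since $X_K^\circ \subset Q_K^\circ$, in particular $p \cdot w > 0$.

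First I would suppose toward a contradiction that some $q \in \tau_K^\circ$ has $q \cdot w \le 0$. Because $\tau_K^\circ$ is convex, hence connected, the segment from $p$ to $q$ stays in $\tau_K^\circ$, and the intermediate value theorem produces a point $r \in \tau_K^\circ$ on the mirror hyperplane $H_w = \{x : x \cdot w = 0\}$. Let $R_w$ be the reflection across $H_w$. By hypothesis $R_w(C) = C$, so $R_w$ permutes the faces of $C$ and, being an isometry, carries relative interiors to relative interiors. Since $r$ is fixed by $R_w$, we get $r \in \tau_K^\circ \cap R_w(\tau_K)^\circ$; as the relative interiors of distinct faces of a polytope are disjoint, this forces $R_w(\tau_K) = \tau_K$, i.e. $\tau_K$ is itself symmetric about $H_w$.

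The heart of the argument is to manufacture from this symmetry a face of strictly smaller dimension whose relative interior still meets $X_K^\circ$, contradicting the minimality in the definition of $\tau_K$. Since $\tau_K$ is $R_w$-invariant, $R_w(p) \in \tau_K^\circ$, so the midpoint $p_0 := \tfrac12(p + R_w(p)) = p - (p \cdot w)\, w$ lies in $\tau_K^\circ \cap H_w$. The vector $w$ is parallel to the nonzero difference $R_w(p) - p$ of points of $\tau_K$, hence $w$ is a feasible direction in the affine hull of $\tau_K$; as $p_0$ is relatively interior and $\tau_K$ is compact, the ray $\{p_0 + sw : s \ge 0\}$ leaves $\tau_K$ at some $s_0 > 0$, and $p_+ := p_0 + s_0 w$ lies on the relative boundary of $\tau_K$. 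A coordinate check using $w \perp v$ for the remaining $v \in K$ and $p_0 \cdot v = p \cdot v$ shows $p_+$ shares the coordinate pattern of $p$ in $\mathcal{B}$, with $p_+ \cdot w = s_0 > 0$, so $p_+ \in X_K^\circ$. But $p_+$ lies in a proper face of $\tau_K$, hence in a face of $C$ of dimension $< \dim \tau_K$; the minimal face of $C$ containing $p_+$ then has dimension below $\dim \tau_K$ and its relative interior meets $X_K^\circ$, contradicting minimality. Thus no such $q$ exists and $\tau_K^\circ \subset \{x : x \cdot w > 0\}$.

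I expect the main obstacle to be precisely this last step. It is tempting but incorrect to try to use the cross-section $\tau_K \cap H_w$ as a smaller face, since a hyperplane through the relative interior produces a cross-section rather than a face. The right move is to travel along the invariant direction $w$ to the relative boundary and then verify the resulting boundary point is still in $X_K^\circ$; orthonormality of $\mathcal{B}$ is exactly what keeps the cone coordinates positive under this translation. Finally, the degenerate case $|K| = 1$ needs a separate easy remark: there $p_0 = 0$, which cannot lie in $\tau_K^\circ \subset \partial C$ because the origin is interior to $C$, giving the contradiction at once.
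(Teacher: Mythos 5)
Your proposal is correct and takes essentially the same route as the paper's proof: both deduce from the reflection symmetry that $\tau_K$ is invariant under the reflection fixing a point of $\tau_K^\circ$, then follow a ray in the reflection direction (which lies in the affine hull of $\tau_K$) out to the relative boundary, landing at a point still in $X_K^\circ$ and thereby producing a face of strictly smaller dimension meeting $X_K^\circ$, contradicting the minimality of $\dim \tau_K$. The only cosmetic differences are that you argue one vector $w \in K$ at a time (using convexity and the intermediate value theorem where the paper uses path-connectedness of $\tau_K^\circ$ and $\partial Q_K$), and you launch the ray from the midpoint $p_0$ rather than from $p$ itself.
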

\begin{proof}
    
    For contradiction, assume that the interior of $\tau_K$ is \emph{not} a subset of the interior of $Q_K$.  
    Note that every cone is contained in some $d$-dimensional cone, so $Q_K$ is the closure of an open set in $\mathbb{R}^d$. Thus, as $\tau_K^{\circ}$ is path-connected and contains a point in $X_K \subset Q_K$, there is some point $q \in \partial Q_K \cap \tau_K^{\circ}$. Note that 

    $$\partial Q_K \subset \bigcup_{v \in K} \{x \in \mathbb{R}^d: x \cdot v = 0\}$$

    Thus, there is some $v_q \in K$ such the $q$ lies in the hyperplane $H$ normal to $v_q$. Let
    $R: \mathbb{R}^d \to \mathbb{R}^d$ denote the reflection about $H$; $R(q) = q$ and $C$ is symmetric about
    $H$ by assumption. Since $q$ is an interior point of $\tau_K$, it follows that $R(\tau_K) = \tau_K$. 
    
    Now consider $p \in \tau_K^{\circ} \cap X_K^{\circ}$, which exists by the definition of $\tau_K$.
    By convexity and the fact that $\tau_K$ is symmetric about $H$, $\tau_K$ contains the segment from $R(p)$ to $p$ in the direction $v_q$. $R$ does not fix any point in $X_K^{\circ}$ and
    $p \in X_K^{\circ}$, so this segment has non-zero length.
    Now let $L$ be the ray based at $p$ and in the direction $v_q$. Note that $L \subset X_K^{\circ}$ since $v_q \in K$ and $p\in X_K^{\circ}$.
    Since $\tau_K$ contains the segment just described, $\partial \tau_K \cap L$ is not empty. Thus,
    $\partial \tau_K$ contains a face $\rho_K$ with $\rho_K^{\circ} \cap X_K^{\circ} \neq \emptyset$. Since $\tau_K$ contains a segment of non-zero length, it has dimension at least $1$ and so $\dim(\rho_K) < \dim(\tau_K)$.
    This contradicts the minimality of the dimension of $\tau_K$, and so we have shown that $\tau_K^{\circ} \subset Q_K^{\circ}$.
\end{proof}

We can now prove the main theorem.

\begin{proof}[Proof of Theorem~\ref*{thm:main}]
It suffices to show that $\tau_K = \tau_{K'}$ if and only if $K = K'$. Suppose $K \neq K'$ and, without loss of generality, that $\dim(X_K) \ge \dim(X_{K'})$. Then there is some $v \in K$ with $v \not\in K'$. So for any $p' \in X_{K'}$ we have $p' \cdot v \le 0$. This means that $X_{K'}$ is disjoint from $Q_K^{\circ}$. Therefore, by the previous lemma $\tau_K^{\circ}$ is disjoint from $X_{K'}$ and so $\tau_K \neq \tau_{K'}$.  This completes the proof of Theorem~\ref*{thm:main}.
\end{proof}

\noindent {\bf Acknowledgments} The first author was supported in part by NSF
grant DMS-1906543.  The authors would like to thank the referee for a previous version for pointing out an error in the proof.

\bibliographystyle{amsplain}
\bibliography{bibliography}

\end{document}